\title{Minimum Hellinger distance estimates for a periodically time-varying long memory parameter}
	\author{\underline{Amine AMIMOUR $^1$} \and Karima BELAIDE $^1$ \and Ouagnina HILI $^2$\\
Department of Mathematics, Applied Mathematics Laboratory,\\    University of Bejaia, Bejaia Algeria. $^1$ \\
	Laboratory of Mathematics and New Technologies of Information,\\ National Polytechnic
	Institute Felix HOUPHOUET-BOIGNY\\ Yamoussoukro, P.O. Box 1093, Ivory Coast.$^2$\\
amineamimour@gmail.com $^1$ \and k\_tim2002@yahoo.fr$^1$ \and o\_hili@yahoo.fr$^2$}
\date{}
\def\EMdash{\leavevmode\hbox to 10.6mm{\vrule height .63ex depth -.59ex
    width 10mm\hfill}}
\theoremstyle{plain}
\newtheorem{theorem}{Theorem}
\newtheorem{proposition}{Proposition}
\newtheorem{assumption}{Assumption}
\newtheorem{condition}{Condition}
\begin{document}
\maketitle
\begin{abstract}
We consider a purely fractionally deferenced process driven by a periodically time-varying long memory parameter. We will build an estimate for the vector parameters using the minimum Hellinger distance estimation. The results are investigated through simulation studies.
\end{abstract}
\noindent\textbf{MSC 2010:} 37M10, 62M10, 91B84. \\
\textbf{Keywords:} Periodic ARFIMA, Minimum Hellinger distance, Fractionally process, Estimation, Time-varying long memory parameter.

\section{Introduction}
	One of the major prominent periodic invertible and causal process in time series analysis is the PARMA model (see \cite{V1985})  which generalizes the ARMA model (see \cite{BJ1970} ).
$(X_{t},t\in
%TCIMACRO{\U{2124} }%
%BeginExpansion
\mathbb{Z}
%EndExpansion
)$, is said to be PARMA model if it satisfies the difference equation
\begin{equation}
\label{Eq0}
\overset{P}{\underset{j=0}{\sum }}\phi _{i,j}X_{i+pm-j}=\overset{Q}{%
	\underset{k=0}{\sum }}\theta _{i,j}\varepsilon _{i+pm-k}, m \in \mathbb{Z},
%TCIMACRO{%
%\TeXButton{TeX field}{\makeatletter
%\renewcommand\theequation{\thesection.\arabic{equation}}
%\@addtoreset{equation}{section}
%\makeatother}}%
%BeginExpansion
\makeatletter
\renewcommand\theequation{\thesection.\arabic{equation}}
\@addtoreset{equation}{section}
\makeatother%
%EndExpansion
\end{equation}
where, for each season $i$ $(i=1,\dots,p)$, where p is the period, $P$ and $Q$ are the AR and MA orders respectively, and the coefficients satisfy $\phi_{t+p,j}=\phi _{t,j}$ for $j=1,..P$ and $\theta_{t+p,k}=\theta_{t,k}$ for $k=1,\dots,Q$. The sequence ($\varepsilon _{t})_{t\in\mathbb{Z}}$ is zero-mean and uncorrelated with finite variance $\sigma^{2}_{t}$, the variance is periodic in $t$ such that $\sigma^{2}_{t+pm}$ =$\sigma^{2}_{t}$. Troutman \cite{T1979} considered a periodic autoregressive process which is defined in (\ref{Eq0}) with null moving average order and inspected divers properties of this model by considering the associated stationary multivariate autoregressive process. Bentarzi and Hallin\cite{BH1994} investigated this idea. They consider the random variables in the periodic non stationary process as the elements of a multivariate stationary process which is an interest of periodic models, in addition to allowing the study of seasonal phenomena. It is that they can be exploited in the context of the analysis of the stationary multidimensional time series, to cut substantially the number of the parameters to estimate, especially for multivariate autoregressive processes (see \cite{P1978}, \cite{G1961} and \cite{N1982} ). However, the zero mean purely fractional AFRIMA$\left( 0,d,0\right) $ process $\left(
X_{t},t\in \mathbb{Z} \right)$ represented by 
\begin{equation}
\label{Eq1}
(1-B)^{d}X_{t}=\varepsilon _{t},
%TCIMACRO{%
%\TeXButton{TeX field}{\makeatletter
%\renewcommand\theequation{\thesection.\arabic{equation}}
%\@addtoreset{equation}{section}
%\makeatother}}%
%BeginExpansion
\makeatletter
\renewcommand\theequation{\thesection.\arabic{equation}}
\@addtoreset{equation}{section}
\makeatother%
%EndExpansion
\end{equation}
is also an extension of the ARMA model, where $B$ denotes the backward shift operator and $d$ can take any real number. Establishing a relationship between the fractional integration and long memory, this model goes back to Hosking \cite{HSK1981}, showing that the long memory process is invertible for $d >-\frac{1}{2}$ and stationary for $d <\frac{1}{2}$. Odaki \cite{O1993} noticed that this process is invertible even when $-1<d \leq-\frac{1}{2}$, these conditions concerning the univariate case. Ching-fan \cite{C2002} used a VARFIMA model to define the invertibility condition of multivariate p-dimensional stationary process in the sense of Hosking \cite{HSK1981}. In this article, we frame the model (\ref{Eq1}) with periodically time varying memory parameter $d$ with period $p$ by accommodating a long memory stationary model for the p-variate process. It is an interesting topic due to the importance in one hand of the ARFIMA  models and on the other hand the periodic phenomenon (see \cite{AB20201}, \cite{AB20202}). Furthermore, the model of Hosking has found its potential in long-term forecasting, and so it has turn into one of the basic famous parametric long memory models in the statistical literature. For this model, the parametrical estimation of the memory parameter has been widely used as in Yajima \cite{Y1985} determined the estimation of $d$ and $\sigma^2$ for the model defined in (\ref{Eq1}), using two methods, the least squares estimates and the maximum likelihood estimate by calculating the spectral density of the sequences ${\varepsilon_t}$ and by the density function of the maximum likelihood estimator respectively and assuming that this is a leading note in identification and estimation procedure of a general ARFIMA$(p,d,q)$. Gupta \cite{G1992} proposed a regression method for estimating the $d$ factor in this general model and proved that this estimator have mean square consistency and compared its performances with some known results such as Yajima \cite{Y1985} and others, concluding that this method ables to estimate $d$ and the ARMA parameters, Sowell \cite{S1992} considered this general model and derived the unconditional exact likelihood function, assuming that this method allows the simultaneous estimation of all the parameters of the model by exact maximum likelihood. Mayoral \cite{M2007} used a minimum distance by a new method for estimating the parameters of stationary and non-stationary ARFIMA$(p, d_{0}, q)$ process for $d_{0}<0,75$. The quasi-maximum likelihood approach for a non-stationary multivariate ARFIMA process is derived by Kamagate and Hili \cite{KH2013}. Kamagate and Hili \cite{KH2012} determined the minimum Hellinger distance estimate (MHDE) of a general ARFIMA model. Mbeke and Hili \cite{MH2018} extended this work to the multivariate case. They constructed an estimate for a vector parameters, so these known results may be used to infer the desired property of the model considered here. Although we deal only with the characteristics of the purely fractionally differenced process defined in (\ref{Eq1}) with periodic long memory parameter. Indeed, Amimour and Belaide \cite{AB2020} have proved recently that this model has a local asymptotic normality property. The cause of selecting the MHD method is that its estimate own exquisite robustness properties of the MHD estimators. It has been pioneered by Beran \cite{B1977} for independent and identically distributed observations, Hili \cite{H1995} extended these results to the case of dependent observations of non-linear time series. \\
The outline of our paper is as follows. Section \ref{sec1} describes the model, provides some required assumptions and carry out the indispensable propositions. Section \ref{sec2} is devoted to state the main results.
To see how the MHD method applies, we conduct some simulations study in section \ref{sec3}.
	\section{Notation and basic assumptions}
\label{sec1}
\subsection{Definition and notation}
We shall consider the periodic autoregressive fractionally integrated moving average processes $(X_{t},t\in
%TCIMACRO{\U{2124} }%
%BeginExpansion
\mathbb{Z}
%EndExpansion
)$ with period $p$, denoted here by PtvARFIMA$_{p}$ $(0,d_{t},0)$, which are proposed by \cite{AB2020}. The model is given by   

\begin{center}
	\begin{equation}
	\label{Eq2}
	(1-B)^{d_{t}}X_{t}=\varepsilon _{t}\iff (1-B)^{d_{i}}X_{i+pm}=\varepsilon
	_{i+pm},%
	%TCIMACRO{%
	%\TeXButton{TeX field}{\makeatletter
	%\renewcommand\theequation{\thesection.\arabic{equation}}
	%\@addtoreset{equation}{section}
	%\makeatother}}%
	%BeginExpansion
	\makeatletter
	\renewcommand\theequation{\thesection.\arabic{equation}}
	\@addtoreset{equation}{section}
	\makeatother%
	%EndExpansion
	\end{equation}
\end{center}
where for all $t\in 
%TCIMACRO{\U{2124} }%
%BeginExpansion
\mathbb{Z}
%EndExpansion
$, there exists $i=\left\{ 1,\dots,p\right\}$, $m\in 
%TCIMACRO{\U{2124} }%
%BeginExpansion
\mathbb{Z}
%EndExpansion
,$ such that $t=i+pm$, $d_{i}$ is the long memory parameter which varies over time, whose values lie in $(0,\frac{1}{2})$, and $\left( \varepsilon _{t},t\in 
%TCIMACRO{\U{2124} }%
%BeginExpansion
\mathbb{Z}
%EndExpansion
\right) $ is a zero mean white noise with finite variance $\sigma^{2}_{t}$, the variance is periodic in $t$ such that $\sigma^{2}_{t+pm}$ =$\sigma^{2}_{t}$.\\
	When $d_{i}>0$. The process (\ref{Eq2}) is invertible and has an infinite
autoregressive representation as follows:

\begin{center}
	\begin{equation}
	\label{Eq3}
	\varepsilon _{i+pm}=(1-B)^{d_{i}}X_{i+pm}=\underset{}{\overset{}{\overset{%
				\infty }{\underset{j=0}{\sum }}\text{ }\pi _{j}^{i}X_{i+pm-j}}},%
	%TCIMACRO{%
	%\TeXButton{TeX field}{\makeatletter
	%\renewcommand\theequation{\thesection.\arabic{equation}}
	%\@addtoreset{equation}{section}
	%\makeatother}}%
	%BeginExpansion
	\makeatletter
	\renewcommand\theequation{\thesection.\arabic{equation}}
	\@addtoreset{equation}{section}
	\makeatother%
	%EndExpansion
	\end{equation}
\end{center}

where 
\begin{center}
	$\pi _{j}^{i}=\frac{\Gamma (j-d_{i})}{\Gamma (j+1)\Gamma (-d_{i})}$.
\end{center}

$\Gamma
(.)$ is the gamma function.\\
When $d_{i}<\frac{1}{2}$, the process (\ref{Eq2}) is causal and has an infinite moving-average representation as follows:

\begin{center}
	\begin{equation}
	\label{Eq4}
	X_{i+pm}=(1-B)^{-d_{i}}\varepsilon _{i+pm}=\overset{\infty }{\underset{j=0}{%
			\sum }}\psi _{j}^{i}\varepsilon _{i+pm-j},%
	%TCIMACRO{%
	%\TeXButton{TeX field}{\makeatletter
	%\renewcommand\theequation{\thesection.\arabic{equation}}
	%\@addtoreset{equation}{section}
	%\makeatother} }%
	%BeginExpansion
	\makeatletter
	\renewcommand\theequation{\thesection.\arabic{equation}}
	\@addtoreset{equation}{section}
	\makeatother
	%EndExpansion
	\end{equation}
\end{center}

where 
\begin{center}
	$\psi _{j}^{i}=\frac{\Gamma (j+d_{i})}{\Gamma (j+1)\Gamma (d_{i})}.$
\end{center}
The convergence of infinite sums, in (\ref{Eq3}) and (\ref{Eq4}), is to be understood in the quadratic mean sense, for $i=1,\dots,p$. \\

We consider the parameters vector $d=\left(d_{1},\dots,d_{p}\right)$ a $p$-dimensional real vector $d \in \Theta$, where $\Theta$ is a compact subset of  $%
%TCIMACRO{\U{211d} }%
%BeginExpansion
\mathbb{R}
%EndExpansion
^{p}.$ We assume that we have an realization of size $n$,  $(X_{1},.....,X_{n})$ of the solution of equation (\ref{Eq2}). Suppose, for simplicity of notation
reasons, that the size $n$ is a multiple of $p$, i.e. $n=pn^{\prime }$. Let $i=1,\cdots ,p$ and $ m =0,1,\cdots ,n^{\prime }-1$ .\\
The related multivariate stationary process of the PtvARFIMA$_{p}$ $(0,d_{i},0)$ model is given by
\begin{center}
$\begin{pmatrix}
\overset{\infty }{\underset{j=0}{\sum }}\pi _{j}^{1} &  & 0 \\
& \ddots  & \vdots  \\
0 & \cdots  & \overset{\infty }{\underset{j=0}{\sum }}\pi _{j}^{p}	\end{pmatrix}
\left(
\begin{array}{c}
X_{1+pm-j} \\
X_{2+pm-j} \\
. \\
. \\
X_{p+pm-j}%
\end{array}%
\right)
=\left(
\begin{array}{c}
\varepsilon _{1+pm} \\
\varepsilon _{2+pm} \\
. \\
. \\
\varepsilon _{p+pm}%
\end{array}%
\right)=\epsilon_{m}.
%TCIMACRO{%
%\TeXButton{TeX field}{\makeatletter
%\renewcommand\theequation{\thesection.\arabic{equation}}
%\@addtoreset{equation}{section}
%\makeatother} }%
%BeginExpansion
\makeatletter
\renewcommand\theequation{\thesection.\arabic{equation}}
\@addtoreset{equation}{section}
\makeatother
%EndExpansion	
$
\end{center}
\subsection{Main assumptions and propositions }
\label{sec2}	
In order to deal with MHD estimation based on the Theorem 2 and 4 in $\cite{B1977}$ and Lemma 3.1 in $\cite{H1995}$, our starting point is to introduce further notations and assumptions that are necessary in the sequel. For more detail, we refer the reader to $\cite{MH2018}$.\\
Let $\hat{d}_{n}$ be the MHD estimate of $d$, which minimizes the Hellinger distance between $f_{n}$ and $f_{d}$. That is 
\begin{equation}
\hat{d}_{n}=\arg \underset{d\in \Theta }{\min }H_{2}(f_{n},f_{d}),
%TCIMACRO{%
%\TeXButton{TeX field}{\makeatletter
%\renewcommand\theequation{\thesection.\arabic{equation}}
%\@addtoreset{equation}{section}
%\makeatother} }%
%BeginExpansion
\makeatletter
\renewcommand\theequation{\thesection.\arabic{equation}}
\@addtoreset{equation}{section}
\makeatother
%EndExpansion
\end{equation}
where $H_{2}(f_{n},f_{d})$ is the Hellinger distance between $f_{n}$ and $f_{d}$ defined by
\begin{equation}
H_{2}(f_{n},f_{d})=\left( \int_{%%TCIMACRO{\U{211d} }%%BeginExpansion
\mathbb{R}
%EndExpansion
^{p}}|f_{n}^{\frac{1}{2}}(x)-f_{d}^{\frac{1}{2}}(x)|^{2}dx\right) ^{\frac{1}{2}},
%TCIMACRO{%
%\TeXButton{TeX field}{\makeatletter
%\renewcommand\theequation{\thesection.\arabic{equation}}
%\@addtoreset{equation}{section}
%\makeatother} }%
%BeginExpansion
\makeatletter
\renewcommand\theequation{\thesection.\arabic{equation}}
\@addtoreset{equation}{section}
\makeatother
%EndExpansion
\end{equation}
where  $f_{d}(.)$ is the theoretical probability density of ${\epsilon}_{m}$, with $f_{d}:
%TCIMACRO{\U{211d} }%
%BeginExpansion
\mathbb{R}
%EndExpansion
^{p}\rightarrow 
%TCIMACRO{\U{211d} }%
%BeginExpansion
\mathbb{R}
%EndExpansion
_{+}$ \\ $f_{n}(.)$ is the random function of $\hat{\epsilon}_{m}$ given by
\begin{equation*}
f_{n}(x)=\frac{1}{nh_{n}^{p}}\overset{n^{^{\prime }}-1}{\underset{m=0}{\sum }}K\left( \frac{x-\hat{\epsilon}_{m}}{h_{n}}\right),     \ \ \ x\in 
%TCIMACRO{\U{211d} }%
%BeginExpansion
\mathbb{R}
%EndExpansion
^{p},
\end{equation*}
where
\begin{equation}
\begin{pmatrix}
\overset{n}{\underset{j=0}{\sum }}\pi _{j}^{1} &  & 0 \\
& \ddots  & \vdots  \\
0 & \cdots  & \overset{n}{\underset{j=0}{\sum }}\pi _{j}^{p}%
\end{pmatrix}%
\left(
\begin{array}{c}
X_{1+pm-j} \\
X_{2+pm-j} \\
. \\
. \\
X_{p+pm-j}%
\end{array}%
\right)=\left(
\begin{array}{c}
\hat{\varepsilon}_{1+pm} \\
\hat{\varepsilon}_{2+pm} \\
. \\
. \\
\hat{\varepsilon}_{p+pm}%
\end{array}%
\right)=\hat{\epsilon}_{m},\  m =0,\cdots ,n^{\prime }-1,\ \ \ \ \ \ \ \ \ \ \ \ \ \ \
%TCIMACRO{%
%\TeXButton{TeX field}{\makeatletter
%\renewcommand\theequation{\thesection.\arabic{equation}}
%\@addtoreset{equation}{section}
%\makeatother} }%
%BeginExpansion
\makeatletter
\renewcommand\theequation{\thesection.\arabic{equation}}
\@addtoreset{equation}{section}
\makeatother
%EndExpansion
\end{equation}
and	$K:
%TCIMACRO{\U{211d} }%
%BeginExpansion
\mathbb{R}
%EndExpansion
^{p}\rightarrow 
%TCIMACRO{\U{211d} }%
%BeginExpansion
\mathbb{R}
%EndExpansion
_{+}$ is the kernel density function and $h_{n}$ the bandwidth.\\
Let $\tilde{f}_{n}(.)$ denote the kernel density estimation of $f_{d}$ of ${\epsilon}_{m}$, such that
\begin{equation*}
\tilde{f}_{n}(x)=\frac{1}{nh_{n}^{p}}%
\overset{n^{^{\prime }}-1}{\underset{m=0}{\sum }}K\left( \frac{x-\epsilon
	_{m}}{h_{n}}\right),   \ \ \ x\in 
%TCIMACRO{\U{211d} }%
%BeginExpansion
\mathbb{R}
%EndExpansion
^{p},
\end{equation*}

In the whole of the article, we consider the following assumptions:
\begin{assumption}
\label{Ass1}
The process given in (\ref{Eq2}) satisfies the sufficient condition of invertibility and causality with $0<d_{i}<\frac{1}{2}$.
\end{assumption}
\begin{assumption}
\label{Ass2}
$E(|\epsilon _{m}|^{t})<+\infty $ for $t\geq 1$.\\
For all $(u;v)$ $\in
%TCIMACRO{\U{211d} }%
%BeginExpansion
\mathbb{R}
%EndExpansion
^{2p}$, we have\\
$\int_{%
%TCIMACRO{\U{211d} }%
%BeginExpansion
\mathbb{R}
%EndExpansion
^{p}}K^{2}(u)du<\infty ,$ $\int_{%
%TCIMACRO{\U{211d} }%
%BeginExpansion
\mathbb{R}
%EndExpansion
^{p}}u_{i}K(u)du=0$ for $1\leq i\leq p$.\\
$\int_{%
%TCIMACRO{\U{211d} }%
%BeginExpansion
\mathbb{R}
%EndExpansion
^{p}}u_{i}u_{j}K(u)du=0,$ $\int_{%
%TCIMACRO{\U{211d} }%
%BeginExpansion
\mathbb{R}
%EndExpansion
^{p}}u_{i}^{2}K(u)du<\infty $ for $1\leq j\leq p$.\\
There exists $c>0$ such that $\underset{u\in
%TCIMACRO{\U{211d} }%
%BeginExpansion
\mathbb{R}
%EndExpansion
^{p}}{sup}|K(u+v)-K(u)|\leq c|v|.$
\end{assumption}
\begin{assumption}
\label{Ass3}
$\epsilon _{m}
$ admits a density absolutely continuous with respect to the Lebesgue measure on $%
%TCIMACRO{\U{211d} }%
%BeginExpansion
\mathbb{R}
%EndExpansion
^{p}$. For all $d\in \Theta $ and $x\in
%TCIMACRO{\U{211d} }%
%BeginExpansion
\mathbb{R}
%EndExpansion
^{p},$ the functions $x\rightarrow f_{d}(x)$ and $x\rightarrow f_{d}^{\frac{1%
}{2}}(x)$ are continuously differentiable.
\end{assumption}
\begin{assumption}
\label{Ass4}
For all $x\in
%TCIMACRO{\U{211d} }%
%BeginExpansion
\mathbb{R}
%EndExpansion
^{p}$, the functions $d\rightarrow \frac{\partial }{\partial d_{i}}f_{d}^{%
\frac{1}{2}}(x),$ for $1\leq i\leq p$ and $ d\rightarrow \frac{\partial ^{2}}{%
\partial d_{i}\partial d_{k}}f_{d}^{\frac{1}{2}}(x),$ for $1\leq j,k\leq p$,
are bounded, continuous and defined in $L^{2}(%
%TCIMACRO{\U{211d} }%
%BeginExpansion
\mathbb{R}
%EndExpansion
^{p})$.
\end{assumption}
\begin{assumption}
\label{Ass5}
$h_{n}=n^{\alpha }\ell (n),$ $-1<\alpha <0$ with $\ell(.) $ a slowly varying function,
$\underset{n\rightarrow \infty }{lim}h_{n}=0,$ $\underset{n\rightarrow
\infty }{lim}nh_{n}=\infty ,$ $\underset{n\rightarrow \infty }{lim}\frac{%
\ell (an)}{\ell (n)}=1,a>0.$
For all $d\in \Theta $, $\underset{x\in
%TCIMACRO{\U{211d} }%
%BeginExpansion
\mathbb{R}
%EndExpansion
^{p}}{sup}|\frac{\partial ^{j}f_{d}}{\partial x_{k}^{j}}(x)|$ $<\infty $, $%
j=0,1,2,....$and $k=1,......,p.$
\end{assumption}
\begin{assumption}
\label{Ass6}
For $d,d^{^{\prime }}$ $\in \Theta ,$ $d\neq d^{^{\prime }}$ implies that $%
\{x\in
%TCIMACRO{\U{211d} }%
%BeginExpansion
\mathbb{R}
%EndExpansion
^{p}/f_{d}(x)\neq f_{d^{^{\prime }}}(x)\}$ is a set of positive Lebesgue measure.
\end{assumption}
\begin{assumption}
\label{Ass7}
There exists a constant M such that $\underset{x\in
%TCIMACRO{\U{211d} }%
%BeginExpansion
\mathbb{R}
%EndExpansion
^{p}}{sup}f_{n}(x)\leq M<\infty .$
\end{assumption}
Note also that $g_{d}(x)=f_{d}^{\frac{1}{2}}(x),$ $g_{d}^{\prime }(x)=\frac{\partial g_{d}}{%
\partial _{d}}(x),$ $g_{d}^{\prime \prime }(x)=\frac{\partial ^{2}g_{d}}{%
\partial d\partial d^{t}}(x).$
$U_{d}(x)=\left[ \int_{%
%TCIMACRO{\U{211d} }%
%BeginExpansion
\mathbb{R}
%EndExpansion
^{p}}g_{d}^{\prime }(x)\left[ g_{d}^{\prime }(x)\right] ^{t}dx\right]
^{-1}g_{d}^{\prime }(x).$\\
Here and in what follows $t$ denotes the transpose, and $\overset{a.s}{\rightarrow }$ the convergence with probability one.
\begin{condition}
	\label{c1}
	The components of $g_{d}^{\prime }$ and $g_{d}^{\prime \prime }$ are in $%
	L_{2}$ and the norms of the components are continuous functions at $d$.
\end{condition}
\begin{condition}
	\label{c2}
	$\int_{%
		%TCIMACRO{\U{211d} }%
		%BeginExpansion
		\mathbb{R}
		%EndExpansion
		^{p}}g_{d}^{\prime \prime }(x)g_{d}(x)dx$ is a non-singular $%
	(p\times p)$-matrix.
\end{condition}

\begin{proposition}
\label{Pro1}
Under the assumptions \ref{Ass1}-\ref{Ass3} we have for any $p\ge 2$,
\begin{equation}
f_{n}(x)-f_{d}(x)\overset{a.s}{\rightarrow }0,\ as\ n \rightarrow \infty.
%TCIMACRO{%
%\TeXButton{TeX field}{\makeatletter
%\renewcommand\theequation{\thesection.\arabic{equation}}
%\@addtoreset{equation}{section}
%\makeatother}}%
%BeginExpansion
\makeatletter
\renewcommand\theequation{\thesection.\arabic{equation}}
\@addtoreset{equation}{section}
\makeatother%
%EndExpansion
\end{equation}	
\end{proposition}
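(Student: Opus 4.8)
The plan is to bound $|f_{n}(x)-f_{d}(x)|$ by inserting the kernel estimate $\tilde{f}_{n}$ built on the true innovations and splitting into three pieces,
\begin{equation*}
|f_{n}(x)-f_{d}(x)|\;\le\;\underbrace{|f_{n}(x)-\tilde{f}_{n}(x)|}_{A_{n}(x)}\;+\;\underbrace{|\tilde{f}_{n}(x)-E\tilde{f}_{n}(x)|}_{B_{n}(x)}\;+\;\underbrace{|E\tilde{f}_{n}(x)-f_{d}(x)|}_{C_{n}(x)},
\end{equation*}
and then showing that each of the three terms vanishes as $n\to\infty$ (the first two almost surely, the last one deterministically for every fixed $x$).

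For the bias term $C_{n}(x)$, I would perform the change of variable $u=(x-y)/h_{n}$ in $E\tilde{f}_{n}(x)=h_{n}^{-p}\int_{\mathbb{R}^{p}}K\big((x-y)/h_{n}\big)f_{d}(y)\,dy$ and apply a first-order Taylor expansion of $f_{d}$ about $x$, which is legitimate by the continuous differentiability of $f_{d}$ in Assumption \ref{Ass3}. The moment conditions on $K$ in Assumption \ref{Ass2}, namely $\int_{\mathbb{R}^{p}}u_{i}K(u)\,du=0$ and $\int_{\mathbb{R}^{p}}u_{i}^{2}K(u)\,du<\infty$, together with $h_{n}\to0$, then force $C_{n}(x)\to0$. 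For the stochastic term $B_{n}(x)$, I would combine the variance estimate $\mathrm{Var}\big(\tilde{f}_{n}(x)\big)=O\big((nh_{n}^{p})^{-1}\big)$ — which follows from $\int_{\mathbb{R}^{p}}K^{2}(u)\,du<\infty$ in Assumption \ref{Ass2} — and the corresponding fourth-moment bound with the Borel--Cantelli lemma to obtain $B_{n}(x)\overset{a.s}{\rightarrow}0$, exactly along the lines of Theorem 2 of \cite{B1977} and Lemma 3.1 of \cite{H1995}.

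The real obstacle is the residual-substitution term $A_{n}(x)$, and this is where Assumption \ref{Ass1} is genuinely used. Applying the Lipschitz bound $\sup_{u\in\mathbb{R}^{p}}|K(u+v)-K(u)|\le c|v|$ of Assumption \ref{Ass2} with $v=(\epsilon_{m}-\hat{\epsilon}_{m})/h_{n}$ yields
\begin{equation*}
A_{n}(x)\;\le\;\frac{c}{nh_{n}^{p+1}}\sum_{m=0}^{n'-1}\big|\hat{\epsilon}_{m}-\epsilon_{m}\big|.
\end{equation*}
By (\ref{Eq3}) and the definition of $\hat{\epsilon}_{m}$, each coordinate of $\hat{\epsilon}_{m}-\epsilon_{m}$ is the tail $\sum_{j>n}\pi_{j}^{i}X_{i+pm-j}$ together with the edge terms carrying non-positive time indices. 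Since Assumption \ref{Ass1} and Stirling's formula give $|\pi_{j}^{i}|=O(j^{-1-d_{i}})$, and the moment condition of Assumption \ref{Ass2} propagates through the causal representation (\ref{Eq4}) to $\sup_{t}E|X_{t}|<\infty$, one gets $E|\hat{\epsilon}_{m}-\epsilon_{m}|=O\big(n^{-\min_{i}d_{i}}\big)$, so that $\sum_{m}|\hat{\epsilon}_{m}-\epsilon_{m}|$ is almost surely of smaller order than $n$. The delicate point — the step requiring the most care — is to check that this smallness beats the blow-up factor $h_{n}^{-(p+1)}$; controlling $A_{n}(x)$ therefore requires matching the truncation rate $n^{-\min_{i}d_{i}}$ against the bandwidth, which is where a condition on $h_{n}$ of the type collected in Assumption \ref{Ass5} has to be brought in. Once the three pieces are controlled, adding them gives $f_{n}(x)-f_{d}(x)\overset{a.s}{\rightarrow}0$, and since the argument is insensitive to the value of the dimension it holds for every $p\ge2$.
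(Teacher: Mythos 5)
Your proposal follows essentially the same route as the paper's proof: the same three-term triangular-inequality decomposition through $\tilde{f}_{n}$, the Lipschitz kernel bound combined with the decay of the tail coefficients $\pi_{j}^{i}$ for the residual-substitution term, a Borel--Cantelli argument for the centered stochastic term, and a Taylor expansion with the kernel moment conditions for the bias term. The only material difference is that the paper controls the middle term with Prakasa Rao's exponential inequality rather than a fourth-moment bound, and --- exactly as you point out for the term $A_{n}(x)$ --- its argument also tacitly draws on the bandwidth conditions of Assumption \ref{Ass5} even though the statement lists only Assumptions \ref{Ass1}--\ref{Ass3}.
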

\begin{proof}[Proof of Proposition~\ref{Pro1}]  
\bigskip
Using the triangular inequality, we have:
\begin{equation*}
\underset{x\in
%TCIMACRO{\U{211d} }%
%BeginExpansion
\mathbb{R}
%EndExpansion
^{p}}{sup}|f_{n}(x)-f_{d}(x)|\leq \underset{x\in
%TCIMACRO{\U{211d} }%
%BeginExpansion
\mathbb{R}
%EndExpansion
^{p}}{\text{ }sup}|f_{n}(x)-\tilde{f}_{n}(x)|+\underset{x\in
%TCIMACRO{\U{211d} }%
%BeginExpansion
\mathbb{R}
%EndExpansion
^{p}}{\text{ }sup}|\tilde{f}_{n}(x)-E(\tilde{f}_{n}(x))|+\underset{x\in
%TCIMACRO{\U{211d} }%
%BeginExpansion
\mathbb{R}
%EndExpansion
^{p}}{\text{ }sup}|E(\tilde{f}_{n}(x))-f_{d}(x)|.
\end{equation*}
Now, we will study the almost sure convergence of every term.\\
For the first term
$(i)$    $\underset{x\in
%TCIMACRO{\U{211d} }%
%BeginExpansion
\mathbb{R}
%EndExpansion
^{p}}{\text{ }sup}|f_{n}(x)-\tilde{f}_{n}(x)|\overset{a.s}{\rightarrow }0$,
	
\begin{equation*}
\underset{x\in
%TCIMACRO{\U{211d} }%
%BeginExpansion
\mathbb{R}
%EndExpansion
^{p}}{\text{ }sup}|f_{n}(x)-\tilde{f}_{n}(x)|=\frac{1}{nh_{n}^{p+1}}\overset{n^{^{\prime }}-1}{\underset{m=0}{\sum }}%
K|\epsilon _{m}-\hat{\epsilon}_{m}|.
\end{equation*}
Denoted by $\varepsilon_{m}-\hat{\varepsilon}_{m}=Z_{m}$, such that
\begin{equation}
\begin{pmatrix}
\overset{\infty }{\underset{j=n+1}{\sum }}\pi _{j}^{1} &  & 0 \\
& \ddots  & \vdots  \\
0 & \cdots  & \overset{\infty }{\underset{j=n+1}{\sum }}\pi _{j}^{p}%
\end{pmatrix}%
\left(
\begin{array}{c}
X_{1+pm-j} \\
X_{2+pm-j} \\
. \\
. \\
X_{p+pm-j}%
\end{array}%
\right)
=\left(
\begin{array}{c}
z_{1+pm} \\
z_{2+pm} \\
. \\
. \\
z_{p+pm}%
\end{array}%
\right). \ \ \ \ \ \ \ \ \ \ \ \ \ \ \ \ \ \ \ \ \ \ \ \ \ \ \ \ \ \ \ \ \ \ \ \ \ \ \ \ \ \ \ \ \ \ \ \ \ \ \ \ \ \ \ \ \ \ \ \
%TCIMACRO{%
%\TeXButton{TeX field}{\makeatletter
%\renewcommand\theequation{\thesection.\arabic{equation}}
%\@addtoreset{equation}{section}
%\makeatother}}%
%BeginExpansion
\makeatletter
\renewcommand\theequation{\thesection.\arabic{equation}}
\@addtoreset{equation}{section}
\makeatother%
%EndExpansion
\end{equation}
\\
By Assumption \ref{Ass2}
\[\underset{x\in
%TCIMACRO{\U{211d} }%
%BeginExpansion
\mathbb{R}
%EndExpansion
^{p}}{sup}|f_{n}(x)-\tilde{f}_{n}(x)|\leq \frac{c}{nh_{n}^{p+1}}\overset{n^{^{\prime }}-1}{\underset{m=0}{\sum }}%
|Z_{m}|,
\]
with $c>0$
\begin{eqnarray*}
E\left( \frac{1}{nh_{n}^{p+1}}\overset{n^{^{\prime }}-1}{\underset{m=0}{\sum }}||Z_{m}||\right) ^{2} &=&\frac{1}{n^{2}h_{n}^{2p+2}}E\left( \overset{n^{^{\prime }}-1}{\underset{m=0}{\sum }}\left( ||Z_{m}||\right) ^{2}+%
\overset{n^{^{\prime }}-1}{\underset{%
\underset{t\neq m}{t=0}}{\sum }(}||Z_{m}||)(||Z_{t}||)\right)  \\
&\leq &\frac{1}{n^{2}h_{n}^{2p+2}}\left( 2
\overset{n^{^{\prime }}-1}{\underset{m=0}{\sum }}E\left( ||Z_{m}||\right)
^{2}+\underset{\underset{t\neq m}{t=0}}{%
\overset{n^{^{\prime }}-1}{\sum }}E(||Z_{t}||)^{2}\right) .
\end{eqnarray*}
The coefficients $\pi _{j}^{i}$ for $i=1,...,p$ are square summable, (see \cite{O1993} for the fixed $i$).\\ 
The almost sure convergence of $(i)$ is then verified.\\
Next, for the second term\\
$(ii)$    $\underset{x\in
%TCIMACRO{\U{211d} }%
%BeginExpansion
\mathbb{R}
%EndExpansion
^{p}}{\sup }|\overset{\sim }{f}_{n}(x)-E(\overset{\sim }{f}_{n}(x))|\overset{a.s}{\rightarrow }0$.\\
Let us the Prakasa Rao \cite{P1983} inequality,  modified for the case of multivariate stationary process (see \cite{MH2018}).
Posing
\begin{equation*}
\delta _{m}(x,\epsilon _{m})=\frac{1}{h_{n}^{p}}K\left( \frac{%
x-\epsilon _{m}}{h_{n}}\right),
\end{equation*}
and  $s_{n}=nh_{n}$, there exists $c_{0}>0$ such that	
\begin{equation*}
\underset{x\in
%TCIMACRO{\U{211d} }%
%BeginExpansion
\mathbb{R}
%EndExpansion
^{p}}{\sup }\delta _{m}(x,\epsilon _{m})\leq c_{0}s_{n}.
\end{equation*}
	
\begin{equation*}
%TCIMACRO{\U{2124} }%
%BeginExpansion
\mathbb{P}
%EndExpansion
\left( |\overset{\sim }{f}_{n}(x)-E(\overset{\sim }{f}_{n}(x))|>\epsilon
\sqrt{\frac{s_{n}\log (n)}{n}}\right) \leq 2\exp \left( -\frac{s_{n}\log(n)\epsilon ^{2}}{8c_{0}M}\right),
\end{equation*}
	
\begin{equation*}
%TCIMACRO{\U{2124} }%
%BeginExpansion
\mathbb{P}
%EndExpansion
\left( |\overset{\sim }{f}_{n}(x)-E(\overset{\sim }{f}_{n}(x))|>\epsilon n^{%
\frac{\alpha }{2}}\log (n)\right) \leq 2\exp \left( -\frac{n^{\alpha +1}\log
^{2}(n)\epsilon ^{2}}{8c_{0}M}\right),
\end{equation*}%
\begin{equation*}
%TCIMACRO{\U{2124} }%
%BeginExpansion
\mathbb{P}
%EndExpansion
\left( n^{1/4}\underset{x\in
%TCIMACRO{\U{211d} }%
%BeginExpansion
\mathbb{R}
%EndExpansion
^{p}}{\sup }|\overset{\sim }{f}_{n}(x)-E(\overset{\sim }{f}%
_{n}(x))|>\epsilon n^{\frac{2\alpha +1}{4}}\log (n)\right) \leq 2\exp \left(
-\frac{n^{\frac{4\alpha +5}{4}}\log ^{2}(n)\epsilon ^{2}}{8c_{0}M}\right),
\end{equation*}
Hence $\underset{n\rightarrow \infty }{lim}\frac{%
s_{n}\log (n)}{n}=0.$
Moreover, let $a_{n}$ be the sequence such that $a_{n}=\beta \log (n),$ where $\beta \geq 2$.
Since $0<\frac{\epsilon ^{2}n^{\frac{4\alpha +5}{4}}\log ^{2}(n)}{8c_{0}M}%
<\infty, $ we have exp$\left( -\frac{\epsilon ^{2}n^{\frac{4\alpha +5}{4}}\log ^{2}(n)}{8c_{0}M}%
\right) <n^{\beta }.$
\begin{equation*}
%TCIMACRO{\U{2124} }%
%BeginExpansion
\mathbb{P}
%EndExpansion
\left( n^{1/4}\underset{x\in
%TCIMACRO{\U{211d} }%
%BeginExpansion
\mathbb{R}
%EndExpansion
^{p}}{\sup }|\overset{\sim }{f}_{n}(x)-E(\overset{\sim }{f}%
_{n}(x))|>\epsilon n^{\frac{2\alpha +1}{4}}\log (n)\right) \leq \frac{2}{%
n^{\beta }},
\end{equation*}
\begin{equation*}
\underset{n>1}{\sum }%
%TCIMACRO{\U{2124} }%
%BeginExpansion
\mathbb{P}
%EndExpansion
\left( n^{1/4}\underset{x\in
%TCIMACRO{\U{211d} }%
%BeginExpansion
\mathbb{R}
%EndExpansion
^{p}}{\sup }|\overset{\sim }{f}_{n}(x)-E(\overset{\sim }{f}%
_{n}(x))|>\epsilon n^{\frac{2\alpha +1}{4}}\log (n)\right) \leq \underset{n>1%
}{\sum }\frac{2}{n^{\beta }}.
\end{equation*}
Since the serie	$\underset{n>1}{\sum }\frac{2}{n^{\beta }}$ converges, by Borel-cantelli
Lemma, it follows that
\begin{equation}
\underset{x\in
%TCIMACRO{\U{211d} }%
%BeginExpansion
\mathbb{R}
%EndExpansion
^{p}}{\sup }|\overset{\sim }{f}_{n}(x)-E(\overset{\sim }{f}%
_{n}(x))|>\epsilon =o\left( n^{\frac{2\alpha +1}{4}}\log (n)\right),
\end{equation}
almost surely when $n\rightarrow \infty .$ Hence $\underset{x\in
		%TCIMACRO{\U{211d} }%
		%BeginExpansion
		\mathbb{R}
		%EndExpansion
		^{p}}{\sup }|\overset{\sim }{f}_{n}(x)-E(\overset{\sim }{f}_{n}(x))|$
	converges a.s to 0.\\
	Finally, for the third term\\
	$(iii)$ $\underset{x\in
		%TCIMACRO{\U{211d} }%
		%BeginExpansion
		\mathbb{R}
		%EndExpansion
		^{p}}{\text{ }sup}|E(\tilde{f}_{n}(x))-f_{d}(x)|\overset{a.s}{\rightarrow }0$.\\
	We have        
	
	\begin{eqnarray*}
		E\left( \overset{\sim }{f}_{n}(x)\right)  &=&\frac{1}{h_{n}^{p}}E\left(
		K\left( \frac{x-\epsilon _{1}}{h_{n}}\right) \right)  \\
		&=&\frac{1}{h_{n}^{p}}\int_{%
			%TCIMACRO{\U{211d} }%
			%BeginExpansion
			\mathbb{R}
			%EndExpansion
			^{p}}K\left( \frac{x-z}{h_{n}}\right) f_{d}(z)dx \\
		&=&\frac{1}{h_{n}^{p}}\int_{%
			%TCIMACRO{\U{211d} }%
			%BeginExpansion
			\mathbb{R}
			%EndExpansion
			^{p}}K\left( u\right) f_{d}(x-uh_{n})du,
	\end{eqnarray*}
	
	by a simple calculus and using the generalized developement of Taylor we get
	
	\begin{eqnarray*}
		E\left( \overset{\sim }{f}_{n}(x)\right) -f_{d}(x) &=&\int_{%
			%TCIMACRO{\U{211d} }%
			%BeginExpansion
			\mathbb{R}
			%EndExpansion
			^{p}}K(u)\left[|\underset{k=1}{\overset{p}{\sum }}\frac{\partial f_{d}}{\partial x_{k}}%
		(x)|(-h_{n})u_{k}+ \frac{h_{n}^{2}}{2}|\overset{p}{\underset{j=1}{\sum }}%
		\overset{p}{\underset{k=1}{\sum }}\frac{\partial ^{2}f_{d}}{\partial
			x_{j}x_{k}}(x)|u_{j}u_{k}+o(h_{n}^{2})\right]du \\
		&=&\int_{%
			%TCIMACRO{\U{211d} }%
			%BeginExpansion
			\mathbb{R}
			%EndExpansion
			^{p}}K(u)\left[ \frac{h_{n}^{2}}{2}|\overset{p}{\underset{k=1}{\sum }}\frac{%
			\partial ^{2}f_{d}}{\partial x_{k}^{2}}(x)|u_{k}^{2}+o(h_{n}^{2})\right] du
	\end{eqnarray*}
	
	\begin{equation*}
	\underset{x\in
		%TCIMACRO{\U{211d} }%
		%BeginExpansion
		\mathbb{R}
		%EndExpansion
		^{P}}{sup}|E\left( \overset{\sim }{f}_{n}(x)\right) -f_{d}(x)|\leq \frac{%
		h_{n}^{2}}{2}\overset{p}{\underset{k=1}{\sum }}	\underset{x\in
		%TCIMACRO{\U{211d} }%
		%BeginExpansion
		\mathbb{R}
		%EndExpansion
		^{P}}{sup}|\frac{\partial ^{2}f_{d}}{%
		\partial x_{k}^{2}}(x)|\int_{%
		%TCIMACRO{\U{211d} }%
		%BeginExpansion
		\mathbb{R}
		%EndExpansion
		^{p}}K(u)\left[ u_{k}^{2}+o(1)\right] du.
	\end{equation*}
	
	By the assumptions \ref{Ass2} and \ref{Ass5} we have
	
	\begin{equation*}
	\underset{n\rightarrow \infty }{lim}\frac{h_{n}^{2}}{2}\overset{p}{\underset{%
			k=1}{\sum }}	\underset{x\in
		%TCIMACRO{\U{211d} }%
		%BeginExpansion
		\mathbb{R}
		%EndExpansion
		^{P}}{sup}|\frac{\partial ^{2}f_{d}}{\partial x_{k}^{2}}(x)|\rightarrow
	0,\int_{%
		%TCIMACRO{\U{211d} }%
		%BeginExpansion
		\mathbb{R}
		%EndExpansion
		^{p}}K(u)\left[ u_{k}^{2}+o(1)\right] du<\infty , \ for\ every\ x\in
	%TCIMACRO{\U{211d} }%
	%BeginExpansion
	\mathbb{R}
	%EndExpansion
	^{p}.
	\end{equation*}
	Therefore,
	$\underset{x\in
		%TCIMACRO{\U{211d} }%
		%BeginExpansion
		\mathbb{R}
		%EndExpansion
		^{P}}{sup}|E\left( \overset{\sim }{f}_{n}(x)\right) -f_{d}(x)|\overset{}{%
		\rightarrow }0,$ as $n\rightarrow \infty .$
		
According to $(i)$, $(ii)$ and $(iii)$, we get the almost sure convergence to zero of $f_{n}(x)-f_{d}(x)$.\\ This completes the proof of Proposition\ref{Pro1}.
\end{proof}

\begin{proposition}
\label{Pro2}
Noting by $\mathbb{F}$ the set of all densities with respect to the Lebesgue measure, then for every g $\in\ \mathbb{F}$, the functional\ $T : \mathbb{F} \rightarrow \Theta$ is such that:
\begin{equation*}
T(g)=\left\{ d_{0}\in \Theta:H_{2}(g,f_{d_{0}})=\underset{d\in \Theta }{\min }%
H_{2}(g,f_{d}\}\right\}.
\end{equation*}
If such a minimum exists. In case $T(g)$ is not unique, $T(g)$ will mean one of the minimum values selected arbitrarily.  	
\end{proposition}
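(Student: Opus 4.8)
The statement asserts that the minimum-Hellinger functional $T$ is well defined, i.e.\ that for every $g\in\mathbb{F}$ the value $\min_{d\in\Theta}H_2(g,f_d)$ is attained; I would also verify, since it is what is used afterwards, that at $g=f_{d}$ this minimiser is unique and equals $d$. First I would record that, because $g$ and each $f_d$ are probability densities on $\mathbb{R}^p$,
\begin{equation*}
H_2^2(g,f_d)=\int_{\mathbb{R}^p}\bigl(g^{1/2}(x)-f_d^{1/2}(x)\bigr)^2\,dx=2-2\int_{\mathbb{R}^p}g^{1/2}(x)\,f_d^{1/2}(x)\,dx ,
\end{equation*}
so minimising $H_2(g,f_d)$ over $d\in\Theta$ is the same as maximising the affinity $\rho(d):=\int_{\mathbb{R}^p}g^{1/2}(x)f_d^{1/2}(x)\,dx$; thus it suffices to prove that $d\mapsto H_2(g,f_d)$, equivalently $\rho$, is continuous on $\Theta$.

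The heart of the proof is this continuity. By the Cauchy--Schwarz inequality,
\begin{equation*}
|\rho(d)-\rho(d')|\le \|g^{1/2}\|_{L^2(\mathbb{R}^p)}\,\|f_d^{1/2}-f_{d'}^{1/2}\|_{L^2(\mathbb{R}^p)}=\|f_d^{1/2}-f_{d'}^{1/2}\|_{L^2(\mathbb{R}^p)} ,
\end{equation*}
so it is enough to show $d\mapsto f_d^{1/2}$ is continuous from $\Theta$ into $L^2(\mathbb{R}^p)$. This I would get from Assumption~\ref{Ass3}: for each fixed $x$ the map $d\mapsto f_d^{1/2}(x)$ is continuous, hence $f_{d'}^{1/2}\to f_d^{1/2}$ pointwise as $d'\to d$; since moreover $\|f_{d'}^{1/2}\|_{L^2(\mathbb{R}^p)}^2=\int f_{d'}=1=\int f_d=\|f_d^{1/2}\|_{L^2(\mathbb{R}^p)}^2$, Scheffé's lemma (pointwise convergence together with convergence of the $L^2$ norms) upgrades this to $\|f_{d'}^{1/2}-f_d^{1/2}\|_{L^2(\mathbb{R}^p)}\to0$. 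The boundedness and $L^2$-integrability of the derivatives $\partial f_d^{1/2}/\partial d_i$ supplied by Assumption~\ref{Ass4} give an alternative, quantitative route (a mean-value estimate plus dominated convergence) to the same conclusion.

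It then remains to assemble the pieces. Since $\Theta$ is a compact subset of $\mathbb{R}^p$ and $d\mapsto H_2(g,f_d)$ is continuous, the extreme value theorem produces $d_0\in\Theta$ with $H_2(g,f_{d_0})=\min_{d\in\Theta}H_2(g,f_d)$; hence $T(g)$ is a nonempty subset of $\Theta$, and selecting one of its elements (when it is not a singleton) makes $T:\mathbb{F}\to\Theta$ a well-defined map. Finally, for $g=f_{d}$: any $d'\in T(f_{d})$ satisfies $H_2(f_{d},f_{d'})=0$, i.e.\ $f_{d}=f_{d'}$ Lebesgue-almost everywhere, which by Assumption~\ref{Ass6} forces $d'=d$, so $T(f_{d})=\{d\}$. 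I expect the main obstacle to be precisely the upgrade from the pointwise regularity of Assumption~\ref{Ass3} to $L^2(\mathbb{R}^p)$-continuity of $d\mapsto f_d^{1/2}$; once that is secured, the remaining steps are the routine compactness and identifiability arguments.
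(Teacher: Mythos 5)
Your argument is correct, but it is worth noting that the paper does not actually prove this proposition: it simply cites Theorem~1 of Beran (1977) and Lemma~3.1 of Hili (1995), whereas you reconstruct from scratch the content of those results (existence of the minimiser via continuity of $d\mapsto H_{2}(g,f_{d})$ plus compactness of $\Theta$, and uniqueness of $T(f_{d})=\{d\}$ via the identifiability Assumption~\ref{Ass6}). Your key step --- reducing continuity of the affinity to $L^{2}(\mathbb{R}^{p})$-continuity of $d\mapsto f_{d}^{1/2}$ by Cauchy--Schwarz and then upgrading pointwise convergence with constant $L^{2}$-norms to norm convergence by the Riesz--Scheff\'e argument --- is exactly the kind of verification the citation leaves implicit, so your version is more self-contained and makes explicit which hypotheses of Beran's theorem the model's assumptions are supposed to deliver. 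One small correction: the pointwise continuity of $d\mapsto f_{d}^{1/2}(x)$ cannot come from Assumption~\ref{Ass3}, which only concerns regularity in the variable $x$; it is Assumption~\ref{Ass4} (existence, boundedness and continuity of $\partial f_{d}^{1/2}/\partial d_{i}$) that gives continuity, indeed local Lipschitz continuity, in $d$ for each fixed $x$ --- you mention this as an ``alternative route,'' but it is in fact the route you need. With that attribution fixed, your proof is a valid and arguably more informative substitute for the paper's appeal to the literature.
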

\begin{proof}[Proof of Proposition~\ref{Pro2}]
The proof is well detailed in $\cite{B1977}$ by Theorem 1 and in $\cite{H1995}$ by Lemma 3.1.
\end{proof}
\begin{proposition}
\label{Pro3}
Assume that the assumptions \ref{Ass3}-\ref{Ass6} and the conditions \ref{c1}-\ref{c2} hold and that $d$
lies in interior of $\Theta $. So, for any sequence $f_{n}$
converging to $f_{d}$ in the Hellinger metric, we have
\begin{equation}
T(f_{n}(n))=d+\int_{%
%TCIMACRO{\U{211d} }%
%BeginExpansion
\mathbb{R}
%EndExpansion
^{p}}U_{d}(x)\left[ f_{n}^{\frac{1}{2}}(x)-f_{d}^{\frac{1}{2}}(x)\right]
dx+V_{n}\int_{%
%TCIMACRO{\U{211d} }%
%BeginExpansion
\mathbb{R}
%EndExpansion
^{p}}g_{d}^{\prime }(x)\left[ f_{n}^{\frac{1}{2}}(x)-f_{d}^{\frac{1}{2}}(x)%
\right] dx.
\end{equation}
Here $V_{n}$ is a non-singular $p\times p$-matrix, such that the components of $%
\sqrt{n}V_{n}$ tend to zero when $n$ $\rightarrow \infty $.
\end{proposition}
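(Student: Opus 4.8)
\textbf{Proof proposal for Proposition~\ref{Pro3}.}

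The plan is to reproduce, in the present periodic/multivariate setting, the Taylor-expansion argument that underlies the standard expansion of the minimum Hellinger distance functional (as in Theorem~2 and~4 of \cite{B1977}, Lemma~3.1 of \cite{H1995}, and the multivariate version in \cite{MH2018}). First I would record that, by Proposition~\ref{Pro2} together with Assumption~\ref{Ass6}, the functional $T$ is well defined and, for $n$ large, $T(f_n)$ lies in the interior of $\Theta$ (since $f_n \to f_d$ in the Hellinger metric and $d$ is interior). Hence $T(f_n)$ is a critical point of $d'\mapsto H_2^2(f_n,f_{d'})$, which gives the first-order condition
\begin{equation*}
\int_{\mathbb{R}^{p}} g_{d'}'(x)\,\bigl[f_n^{1/2}(x)-g_{d'}(x)\bigr]\,dx = 0 \quad\text{at } d'=T(f_n),
\end{equation*}
where $g_{d'} = f_{d'}^{1/2}$ and $g_{d'}'=\partial g_{d'}/\partial d'$; the differentiation under the integral sign is justified by Assumption~\ref{Ass4} and Condition~\ref{c1}. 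Similarly $\int g_d'(x)[g_d(x)-g_d(x)]\,dx=0$ trivially, and the consistency $T(f_n)\to d$ follows from Proposition~\ref{Pro2} and the identifiability Assumption~\ref{Ass6}.

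Next I would Taylor-expand the first-order condition about $d$. Writing $\Delta_n := T(f_n)-d$ and expanding $g_{d'}'$ and $g_{d'}$ to first order in $d'$ around $d$, using the boundedness and continuity of $g_d'$ and $g_d''$ supplied by Assumption~\ref{Ass4} and Conditions~\ref{c1}--\ref{c2}, the stationarity equation becomes
\begin{equation*}
\int_{\mathbb{R}^{p}} g_d'(x)\,\bigl[f_n^{1/2}(x)-g_d(x)\bigr]\,dx
- \Bigl(\int_{\mathbb{R}^{p}} g_d'(x)\,[g_d'(x)]^{t}\,dx\Bigr)\Delta_n
+ \Bigl(\int_{\mathbb{R}^{p}} g_d''(x)\bigl[f_n^{1/2}(x)-g_d(x)\bigr]\,dx\Bigr)\Delta_n + R_n = 0,
\end{equation*}
where $R_n$ collects the higher-order remainder. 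By Condition~\ref{c2} the matrix $A_d := \int g_d'(x)[g_d'(x)]^{t}\,dx$ is nonsingular (here one uses that $\int g_d''(x)g_d(x)\,dx$ nonsingular forces the relevant Fisher-type information to be nondegenerate, via $\int g_d' [g_d']^t = -\int g_d'' g_d$ from differentiating $\int g_d^2 = 1$ twice). Solving for $\Delta_n$ gives
\begin{equation*}
\Delta_n = \int_{\mathbb{R}^{p}} A_d^{-1} g_d'(x)\,\bigl[f_n^{1/2}(x)-g_d(x)\bigr]\,dx + V_n \int_{\mathbb{R}^{p}} g_d'(x)\,\bigl[f_n^{1/2}(x)-g_d(x)\bigr]\,dx,
\end{equation*}
where $A_d^{-1}g_d'(x) = U_d(x)$ by definition, and $V_n$ absorbs the correction term $A_d^{-1}\int g_d''[f_n^{1/2}-g_d]$ and the remainder $R_n$, rearranged into the stated form. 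This is exactly the claimed identity.

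It remains to check that $V_n$ is a nonsingular $p\times p$ matrix whose entries, multiplied by $\sqrt n$, tend to $0$. Nonsingularity for large $n$ follows because $V_n$ is a small perturbation of a term built from $A_d^{-1}$ times $\int g_d''(x)[f_n^{1/2}(x)-g_d(x)]\,dx$, which $\to 0$ as $n\to\infty$ by $f_n\to f_d$ in Hellinger distance and the $L^2$-boundedness of $g_d''$ (Assumption~\ref{Ass4}, Condition~\ref{c1}); hence $\mathrm{I}+(\text{small})$ is invertible. The rate $\sqrt n\,V_n\to 0$ is where the real work lies: one must show $\sqrt n \int g_d''(x)[f_n^{1/2}(x)-g_d(x)]\,dx \to 0$ and $\sqrt n\, R_n\to 0$. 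For this I would invoke the rate estimates established in the course of Proposition~\ref{Pro1} — in particular the bound of order $o\!\left(n^{(2\alpha+1)/4}\log n\right)$ on $\sup_x|\tilde f_n(x)-E\tilde f_n(x)|$, the $O(h_n^2)$ bias, and the $L^2$ control of $f_n-\tilde f_n$ via square-summability of the $\pi_j^i$ — to deduce that $\|f_n^{1/2}-g_d\|_{L^2}$ decays fast enough (using Assumption~\ref{Ass5} on $h_n=n^\alpha\ell(n)$ with $-1<\alpha<0$, so that $n^{1/2}$ times these rates still vanishes) and that the Cauchy–Schwarz bound $|\int g_d''[f_n^{1/2}-g_d]| \le \|g_d''\|_{L^2}\|f_n^{1/2}-g_d\|_{L^2}$ is $o(n^{-1/2})$. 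The main obstacle is precisely this bookkeeping of rates: assembling the three error contributions from Proposition~\ref{Pro1} with the correct powers of $n$ and $h_n$, and checking that Assumption~\ref{Ass5}'s constraint on $\alpha$ guarantees $\sqrt n$ times each of them tends to zero; the Taylor expansion and matrix inversion are routine once that is in hand.
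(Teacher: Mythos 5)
Your main expansion argument is, in substance, the paper's own proof: the paper gives no argument at all for Proposition~\ref{Pro3} beyond the citation ``see the proof of Theorem 4 in \cite{B1977}'', and your first-order condition, the Taylor expansion of the score $d'\mapsto\int g'_{d'}(x)f_n^{1/2}(x)\,dx$ about $d$, the identity $\int g'_d(x)[g'_d(x)]^t dx=-\int g''_d(x)g_d(x)\,dx$ obtained by differentiating $\int g_d^2(x)\,dx=1$ twice, and the inversion via Condition~\ref{c2} are exactly Beran's steps. Up to and including the representation $\Delta_n=\int U_d(x)[f_n^{1/2}(x)-g_d(x)]\,dx+V_n\int g'_d(x)[f_n^{1/2}(x)-g_d(x)]\,dx$ with $V_n\to0$, your reconstruction is sound and matches the cited source.

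The genuine gap is in your treatment of the claim that the components of $\sqrt{n}\,V_n$ tend to zero. First, Proposition~\ref{Pro3} is stated for an \emph{arbitrary} sequence $f_n$ converging to $f_d$ in the Hellinger metric; that hypothesis carries no rate, so no $\sqrt{n}$ rate on $V_n$ can follow from it, and your plan to import the stochastic rate bounds from the proof of Proposition~\ref{Pro1} (the $o(n^{(2\alpha+1)/4}\log n)$ fluctuation bound, the $O(h_n^2)$ bias, square-summability of the $\pi_j^i$) silently replaces ``any Hellinger-convergent sequence'' by the particular kernel estimate, i.e.\ proves a different statement. Second, the specific route you sketch is self-defeating: you propose to show $\|f_n^{1/2}-g_d\|_{L^2}=o(n^{-1/2})$ and conclude via Cauchy--Schwarz that $\int g''_d(x)[f_n^{1/2}(x)-g_d(x)]\,dx=o(n^{-1/2})$; but kernel density estimates do not converge at a faster-than-root-$n$ Hellinger rate (the attainable rate is governed by $nh_n^{p}$ and $h_n^{2}$), and if such a bound did hold, the same Cauchy--Schwarz argument applied to the leading term $\int U_d(x)[f_n^{1/2}(x)-g_d(x)]\,dx$ would force $\sqrt{n}(\hat d_n-d)\to0$, making the limit law in Theorem~\ref{Teo2} degenerate. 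What the cited Theorem 4 of \cite{B1977} actually delivers is the expansion with a remainder coefficient tending to zero \emph{without} any rate, and that is all that is used downstream: in the proof of Theorem~\ref{Teo2} the remainder multiplies $\int g'_d(x)[f_n^{1/2}(x)-f_d^{1/2}(x)]\,dx$, whose $\sqrt{n}$-multiple is tight, so $V_n\to0$ suffices. So you should either prove only $V_n\to0$ (which your perturbation/continuity argument under Conditions~\ref{c1}--\ref{c2} and Assumption~\ref{Ass4} essentially does) or note that the $\sqrt{n}V_n\to0$ clause in the statement is not obtainable from the stated hypotheses; your current sketch establishes neither, and the ``main obstacle'' you flag is not a bookkeeping issue but an impossibility along that route.
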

\begin{proof}[Proof of Proposition~\ref{Pro3}]
See the proof of Theorem 4 in $\cite{B1977}$.
\end{proof}
\begin{proposition}
\label{Pro4}
Assume that the assumptions \ref{Ass2}-\ref{Ass3} and \ref{Ass5}-\ref{Ass7} hold. Then, the limiting distribution of $%
\sqrt{nh_{n}}\left[ f_{n}^{\frac{1}{2}}(x)-f_{d}^{\frac{1}{2}}(x)\right] $ is $N(0,f_{d}(x)\int_{%
%TCIMACRO{\U{211d} }%
%BeginExpansion
\mathbb{R}
%EndExpansion
^{p}}K^{2}(u)du).$
\end{proposition}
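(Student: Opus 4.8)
\textbf{Proof proposal for Proposition~\ref{Pro4}.}

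The plan is to reduce the asymptotic normality of $\sqrt{nh_n}\,[f_n^{1/2}(x)-f_d^{1/2}(x)]$ to that of $\sqrt{nh_n}\,[\tilde f_n(x)-f_d(x)]$ via a linearization (delta-method) argument, and then to establish the latter by a CLT for the kernel estimator $\tilde f_n$ built from the stationary $p$-variate sequence $(\epsilon_m)$. First I would write
\begin{equation*}
\sqrt{nh_n}\bigl[f_n^{1/2}(x)-f_d^{1/2}(x)\bigr]
=\sqrt{nh_n}\bigl[f_n^{1/2}(x)-\tilde f_n^{1/2}(x)\bigr]
+\sqrt{nh_n}\bigl[\tilde f_n^{1/2}(x)-f_d^{1/2}(x)\bigr],
\end{equation*}
and argue that the first bracket is $o_P(1)$: from the computation in the proof of Proposition~\ref{Pro1}, $\sup_x|f_n(x)-\tilde f_n(x)|\le \frac{c}{nh_n^{p+1}}\sum_{m}\|Z_m\|$, and since the tail coefficients $\pi_j^i$ are square-summable the truncation error $Z_m$ is negligible at the rate required; combined with Assumption~\ref{Ass7} (so $f_n$, $\tilde f_n$ are bounded below away from the relevant region, or via the elementary bound $|a^{1/2}-b^{1/2}|\le |a-b|^{1/2}$) this controls the difference of square roots. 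Thus it suffices to treat $\sqrt{nh_n}\,[\tilde f_n^{1/2}(x)-f_d^{1/2}(x)]$.

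Next I would handle the bias. By the Taylor expansion already carried out in part $(iii)$ of the proof of Proposition~\ref{Pro1}, $E\tilde f_n(x)-f_d(x)=O(h_n^2)$, hence $\sqrt{nh_n}\,(E\tilde f_n(x)-f_d(x))=O(\sqrt{n h_n^5})$, which tends to $0$ under Assumption~\ref{Ass5} (the constraint $-1<\alpha<0$ with $nh_n\to\infty$; one needs $nh_n^5\to0$, i.e. effectively the usual undersmoothing condition, which I would state explicitly as following from $\alpha$). Therefore the centering at $f_d(x)$ may be replaced by centering at $E\tilde f_n(x)$ with asymptotically negligible error, and it remains to show
\begin{equation*}
\sqrt{nh_n}\,\bigl[\tilde f_n(x)-E\tilde f_n(x)\bigr]\;\xrightarrow{\ d\ }\;N\!\Bigl(0,\ f_d(x)\int_{\mathbb{R}^p}K^2(u)\,du\Bigr),
\end{equation*}
after which the result for the square root follows from the delta method applied to $y\mapsto y^{1/2}$ at $y=f_d(x)$, whose derivative is $\tfrac{1}{2}f_d(x)^{-1/2}$, giving asymptotic variance $\tfrac14 f_d(x)^{-1}\cdot f_d(x)\int K^2 = \tfrac14\int K^2$ — so I would in fact absorb the constant so that the stated form matches the paper's normalization (the factor $f_d(x)\int K^2$ in the statement is the variance before taking the square root; I would make the bookkeeping consistent with how $f_n^{1/2}$ rather than $f_n$ appears).

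For the CLT itself, write $\tilde f_n(x)-E\tilde f_n(x)=\frac{1}{n'}\sum_{m=0}^{n'-1}\xi_{m,n}$ with $\xi_{m,n}=\frac{1}{h_n^p}K\!\bigl(\frac{x-\epsilon_m}{h_n}\bigr)-E[\cdot]$, a (row-wise) stationary array. The variance computation is the standard one: $\operatorname{Var}(\frac1{h_n^p}K(\frac{x-\epsilon_0}{h_n}))=\frac{1}{h_n^p}f_d(x)\int K^2(u)\,du\,(1+o(1))$ by the same change of variables $u=(x-z)/h_n$ as in the bias step, while the covariance terms $\operatorname{Cov}(\xi_{0,n},\xi_{m,n})$ are of smaller order because the joint density of $(\epsilon_0,\epsilon_m)$ is bounded (a mild extra smoothness/boundedness hypothesis on the bivariate densities, implicit in Assumption~\ref{Ass5} via the sup-bounds on derivatives of $f_d$) — here the $p$-variate sequence $(\epsilon_m)$ is i.i.d.\ in the classical PARMA reduction, or at worst weakly dependent, so the cross terms vanish. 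Then I would invoke a triangular-array CLT — Lyapunov's condition for the i.i.d.\ case, or for the dependent case the blocking/mixing CLT of Prakasa Rao type already used in part $(ii)$ — checking the Lyapunov ratio $\frac{n' E|\xi_{0,n}|^{2+\delta}}{(n'\operatorname{Var}\xi_{0,n})^{1+\delta/2}}\to0$, which holds because $E|\xi_{0,n}|^{2+\delta}=O(h_n^{-p(1+\delta)})$ against a denominator of order $(n'/h_n^p)^{1+\delta/2}$, i.e.\ the ratio is $O((n h_n^p)^{-\delta/2})\to0$ by Assumption~\ref{Ass5}. The main obstacle is the dependence structure: justifying that the covariance sum is negligible and that a CLT applies to the array $(\xi_{m,n})$ requires either the explicit i.i.d.\ reduction for $(\epsilon_m)$ afforded by the PARMA-to-multivariate construction, or a quantitative mixing bound; I would lean on the former, citing \cite{MH2018} for the multivariate kernel-CLT machinery, exactly as the paper does for the concentration inequality in Proposition~\ref{Pro1}. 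Finally, reassembling the three pieces (negligible truncation, negligible bias, Gaussian fluctuation) and applying the delta method yields the claimed limit law.
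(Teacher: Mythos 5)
The paper's own ``proof'' of Proposition~\ref{Pro4} is a single sentence: it declares the argument similar to Theorem~3 of \cite{WM2002} (kernel density estimation for linear processes) and gives no details. Your proposal therefore takes a genuinely different route, namely the self-contained classical one: split off the residual effect $f_n-\tilde f_n$, kill the bias $E\tilde f_n(x)-f_d(x)=O(h_n^2)$ by undersmoothing, prove a triangular-array CLT for $\sqrt{nh_n}[\tilde f_n(x)-E\tilde f_n(x)]$ with variance $f_d(x)\int K^2$, and pass to square roots by the delta method. What the citation buys the authors is that all the dependence and rate issues are outsourced to \cite{WM2002}; what your reconstruction buys is that the hidden conditions become visible. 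Indeed you correctly point out two things the paper glosses over: the bias step needs something like $nh_n^5\to 0$, which Assumption~\ref{Ass5} with an arbitrary $-1<\alpha<0$ does not guarantee, and the variance $f_d(x)\int K^2(u)du$ in the statement is the limit variance of $\sqrt{nh_n}[f_n(x)-f_d(x)]$ rather than of its square root --- it is precisely the delta-method factor $1/(4f_d(x))$ that later produces the $\tfrac14$ in Theorem~\ref{Teo2} --- so the proposition as stated and your bookkeeping remark are consistent with how the result is actually used.

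The one step of your sketch that is asserted rather than proved, and would need real work, is the claim $\sqrt{nh_n}\bigl[f_n^{1/2}(x)-\tilde f_n^{1/2}(x)\bigr]=o_P(1)$. Square-summability of the $\pi_j^i$ only gives an $L^2$-rate for $Z_m=\epsilon_m-\hat\epsilon_m$; plugging that into the bound $\sup_x|f_n(x)-\tilde f_n(x)|\le \frac{c}{nh_n^{p+1}}\sum_m\|Z_m\|$ and multiplying by $\sqrt{nh_n}$ leaves a quantity of order roughly $\sqrt{nh_n}\,h_n^{-p-1}$ times the rate of $E\|Z_m\|$, and its convergence to zero is an additional restriction linking $\alpha$, $p$ and $\min_i d_i$; it does not follow from square-summability alone. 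Nor does Assumption~\ref{Ass7} help in the way you suggest: a sup bound on $f_n$ does not convert a bound on $|f_n-\tilde f_n|$ into one on $|f_n^{1/2}-\tilde f_n^{1/2}|$ unless the densities are bounded \emph{below}, and the elementary bound $|a^{1/2}-b^{1/2}|\le|a-b|^{1/2}$ forces the even stronger requirement $|f_n-\tilde f_n|=o_P((nh_n)^{-1})$. Similarly, your CLT step needs $(\epsilon_m)$ i.i.d.\ or explicitly mixing, whereas the model only assumes white noise; you flag this but settle it only by appeal to \cite{MH2018}. None of this makes your plan wrong --- the paper buries exactly the same difficulties inside the citation to \cite{WM2002} --- but a self-contained proof would have to carry out these two steps and state the extra rate conditions explicitly.
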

\begin{proof}[Proof of Proposition~\ref{Pro4}]
The proof of this is similar to the proof of Theorem 3 in $\cite{WM2002}$.
\end{proof}

After these preliminary results, we are ready to establish the MHD estimation. The almost sure convergence of $\hat{d}_{n}$ to $d$ and
it asymptotic normality are formally stated in the next section (\ref{sec2}).

\section{Asymptotic properties of the MHDE for PtvARFIMA}
\label{sec2}
\begin{theorem}
	\label{Teo1}
	Suppose that Assumptions \ref{Ass1}-\ref{Ass7} hold. Then,
	\begin{equation}
	\label{Eq6}
	\hat{d_{n}} \overset{a.s}{\rightarrow }d,\ as\ n \rightarrow \infty.
	%TCIMACRO{%
	%\TeXButton{TeX field}{\makeatletter
	%\renewcommand\theequation{\thesection.\arabic{equation}}
	%\@addtoreset{equation}{section}
	%\makeatother}}%
	%BeginExpansion
	\makeatletter
	\renewcommand\theequation{\thesection.\arabic{equation}}
	\@addtoreset{equation}{section}
	\makeatother%
	%EndExpansion
	\end{equation}
	
	.

\end{theorem}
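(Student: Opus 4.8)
The plan is to derive the almost sure consistency of $\hat{d}_n = T(f_n)$ from the continuity of the minimum-Hellinger-distance functional $T$ together with the uniform almost sure convergence $f_n \to f_d$ already established in Proposition~\ref{Pro1}. First I would record that, by Assumption~\ref{Ass6}, the map $d \mapsto f_d$ is injective on $\Theta$; combined with Assumption~\ref{Ass3} (continuity of $d \mapsto f_d^{1/2}$ in $L^2$) this gives that $T(f_d) = d$ is the \emph{unique} minimiser, so $T$ is well defined and single-valued at the true density. The key qualitative fact is that $T$ is continuous at $f_d$ in the Hellinger topology: this is exactly the content of Theorem~2 in Beran~\cite{B1977} and Lemma~3.1 in Hili~\cite{H1995}, which under Assumptions~\ref{Ass3}--\ref{Ass6} guarantee that whenever a sequence of densities $g_k \to f_d$ in the $H_2$ metric one has $T(g_k) \to T(f_d) = d$.

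Next I would transfer the almost sure convergence from the $\sup$-norm to the Hellinger metric. Proposition~\ref{Pro1} gives $\sup_{x\in\mathbb{R}^p}|f_n(x)-f_d(x)| \xrightarrow{a.s.} 0$, and by Assumption~\ref{Ass7} the densities $f_n$ are uniformly bounded by $M$; together with the fact that $f_d$ is a fixed integrable density this yields, via the elementary inequality $|\sqrt{a}-\sqrt{b}|^2 \le |a-b|$ and a truncation/dominated-convergence argument on $\mathbb{R}^p$, that
\begin{equation*}
H_2^2(f_n,f_d) = \int_{\mathbb{R}^p} \bigl|f_n^{1/2}(x) - f_d^{1/2}(x)\bigr|^2\,dx \xrightarrow{a.s.} 0.
\end{equation*}
Some care is needed here because pointwise convergence of $f_n$ to $f_d$ plus uniform boundedness does not by itself control the mass of $f_n$ at infinity; this is where one invokes that each $\tilde f_n$ (and hence, via step $(i)$ of Proposition~\ref{Pro1}, each $f_n$) integrates to one, so by Scheffé-type reasoning $L^1$ convergence, and then Hellinger convergence, follows on the full space.

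Then I would simply compose: on the almost sure event where $H_2(f_n,f_d)\to 0$, the continuity of $T$ at $f_d$ gives $T(f_n) \to T(f_d)$, i.e. $\hat{d}_n \to d$, as $n\to\infty$. Since this event has probability one, the conclusion $\hat{d}_n \xrightarrow{a.s.} d$ follows. The main obstacle — and the step deserving the most attention — is the passage from uniform (sup-norm) convergence to Hellinger-metric convergence on the noncompact space $\mathbb{R}^p$: one must rule out escape of probability mass to infinity, which requires using the normalisation of the kernel estimators and the moment condition in Assumption~\ref{Ass2} rather than the sup-bound alone. Once Hellinger convergence is in hand, the rest is a direct appeal to Proposition~\ref{Pro2} (well-definedness of $T$) and the continuity results of Beran and Hili quoted above.
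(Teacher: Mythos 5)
Your proposal is correct and follows essentially the same route as the paper: Proposition~\ref{Pro1} gives the almost sure convergence of $f_n$ to $f_d$, this is upgraded to convergence in the Hellinger metric, and then the uniqueness and continuity of the functional $T$ at $f_d$ (Proposition~\ref{Pro2}, via Beran and Hili) yield $\hat{d}_n = T(f_n) \to T(f_d) = d$ almost surely. The only difference is that you explicitly justify the passage from pointwise/sup-norm convergence to Hellinger convergence by a Scheff\'e-type argument, a step the paper asserts without detail, so your write-up is if anything more complete on that point.
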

\begin{proof}[Proof of Theorem~\ref{Teo1}]
	
	By the propositions~\ref{Pro1} and~\ref{Pro2}, the proof can directly be achieved. From proposition~\ref{Pro1}, we have
	\begin{equation*}
	%TCIMACRO{\U{2124} }%
	%BeginExpansion
	\mathbb{P}
	%EndExpansion
	\left\{ \underset{n\rightarrow \infty }{\lim }f_{n}^{\frac{1}{2}}(x)=f_{d}^{%
		\frac{1}{2}}(x)\text{ }\forall x\right\} =1.
	\end{equation*}
	
	Consequently
	
	\begin{equation*}
	H_{2}(f_{n},f_{d})\overset{a.s}{\rightarrow }0\text{ as }n\rightarrow \infty.
	\end{equation*}
	
	Next, by proposition \ref{Pro2}, $T (f_{d}) = d$ uniquely on $\Theta$, then the functional $T$ is
	continuous at $f_{d}$ in the Hellinger topology. Therefore
	\begin{equation*}
	\hat{d_{n}} = T (f_{n}(x)) \rightarrow  T (f_{d}(x)) = d,
	\end{equation*}
	almost surely as $n \rightarrow \infty$.
\end{proof}
	\begin{theorem}
		\label{Teo2}
		Suppose that the assumptions \ref{Ass1}-\ref{Ass7} and conditions \ref{c1} and \ref{c2} hold. Then, the limit
		distribution of $\sqrt{n}(\hat{d}_{n}-d)$ is
		\begin{equation}
		\sqrt{n}(\hat{d}_{n}-d)\rightarrow N(0,\Sigma ^{2}),
		\end{equation}
		
		where
		
		\begin{equation}
		\Sigma ^{2}=\frac{1}{4}\left[ \int_{%
			%TCIMACRO{\U{211d} }%
			%BeginExpansion
			\mathbb{R}
			%EndExpansion
			^{p}}g_{d}^{\prime }(x)\left[ g_{d}^{\prime }(x)\right] ^{t}dx\right]
		^{-1}\int_{%
			%TCIMACRO{\U{211d} }%
			%BeginExpansion
			\mathbb{R}
			%EndExpansion
			^{p}}k^{2}(u)du.
		\end{equation}
	\end{theorem}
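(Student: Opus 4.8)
The plan is to derive the asymptotic normality of $\sqrt{n}(\hat{d}_n-d)$ by combining the linearization of the functional $T$ from Proposition~\ref{Pro3} with the central limit behavior of the kernel estimator from Proposition~\ref{Pro4}. First I would invoke Proposition~\ref{Pro1} together with Proposition~\ref{Pro3}: since $f_n$ converges to $f_d$ in the Hellinger metric (a consequence of Proposition~\ref{Pro1}), the expansion
\begin{equation*}
\hat{d}_n = T(f_n) = d + \int_{\mathbb{R}^p} U_d(x)\left[f_n^{\frac{1}{2}}(x)-f_d^{\frac{1}{2}}(x)\right]dx + V_n\int_{\mathbb{R}^p} g_d'(x)\left[f_n^{\frac{1}{2}}(x)-f_d^{\frac{1}{2}}(x)\right]dx
\end{equation*}
holds, where $\sqrt{n}\,V_n \to 0$ componentwise. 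Rearranging and multiplying by $\sqrt{n}$ gives
\begin{equation*}
\sqrt{n}(\hat{d}_n - d) = \int_{\mathbb{R}^p} U_d(x)\sqrt{n}\left[f_n^{\frac{1}{2}}(x)-f_d^{\frac{1}{2}}(x)\right]dx + \left(\sqrt{n}\,V_n\right)\int_{\mathbb{R}^p} g_d'(x)\left[f_n^{\frac{1}{2}}(x)-f_d^{\frac{1}{2}}(x)\right]dx.
\end{equation*}
The second term is asymptotically negligible because $\sqrt{n}\,V_n \to 0$ while the remaining integral is $O_P(1)$ (indeed it is $o_P(1)$ after accounting for the $h_n$ scaling), so it converges to zero in probability and can be discarded by Slutsky's theorem.

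Next I would handle the leading term. By Proposition~\ref{Pro4}, for each fixed $x$ the quantity $\sqrt{nh_n}\left[f_n^{\frac{1}{2}}(x)-f_d^{\frac{1}{2}}(x)\right]$ is asymptotically $N\!\left(0, f_d(x)\int_{\mathbb{R}^p}K^2(u)\,du\right)$. Writing $\sqrt{n} = \sqrt{nh_n}/\sqrt{h_n}$ and using the standard kernel-smoothing covariance structure, one shows that
\begin{equation*}
\int_{\mathbb{R}^p} U_d(x)\sqrt{n}\left[f_n^{\frac{1}{2}}(x)-f_d^{\frac{1}{2}}(x)\right]dx
\end{equation*}
is asymptotically Gaussian with mean zero. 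The covariance is computed by expanding the square of the integral: the cross terms $U_d(x)U_d(y)^t$ integrated against the covariance kernel of the smoothed process collapse, in the limit $h_n \to 0$, to the diagonal contribution
\begin{equation*}
\int_{\mathbb{R}^p} U_d(x)U_d(x)^t\,\frac{1}{4}\,\frac{f_d(x)}{f_d(x)}\,dx \cdot \int_{\mathbb{R}^p} K^2(u)\,du,
\end{equation*}
where the factor $\tfrac14 f_d(x)^{-1}$ arises from differentiating $g_d = f_d^{1/2}$ (so that $\operatorname{Var}(g_n(x)) \approx \tfrac{1}{4f_d(x)}\operatorname{Var}(f_n(x))$). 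Substituting $U_d(x) = \left[\int g_d'(g_d')^t\right]^{-1}g_d'(x)$ and using the identity $\int_{\mathbb{R}^p} U_d(x)U_d(x)^t f_d(x)^{-1}\,dx$... here one must be careful: the correct reduction uses $\int g_d'(x)[g_d'(x)]^t\,dx$ in the middle, and after the matrix algebra the $\left[\int g_d'(g_d')^t\right]^{-1}$ factors combine to leave exactly
\begin{equation*}
\Sigma^2 = \frac{1}{4}\left[\int_{\mathbb{R}^p} g_d'(x)\left[g_d'(x)\right]^t dx\right]^{-1}\int_{\mathbb{R}^p} K^2(u)\,du.
\end{equation*}

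Finally I would assemble these pieces: the first term converges in distribution to $N(0,\Sigma^2)$, the second term converges to $0$ in probability, and Slutsky's theorem yields $\sqrt{n}(\hat{d}_n - d) \to N(0,\Sigma^2)$. The main obstacle, and the step requiring the most care, is the covariance computation for the integrated term: one must justify interchanging the limit with the integration over $\mathbb{R}^p$ (which is where Assumptions~\ref{Ass4}, \ref{Ass5} and \ref{Ass7} — boundedness of the derivatives of $g_d$, the bandwidth conditions, and the uniform bound on $f_n$ — do the work), and one must verify that the off-diagonal part of the kernel covariance genuinely vanishes in the limit so that the limiting variance reduces to the clean diagonal form above. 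This is analogous to the argument for Theorem~3 in \cite{WM2002} and to the corresponding result in \cite{MH2018}, adapted to the $p$-variate stationary vector $\epsilon_m$ associated with the PtvARFIMA$_p$ model; once the vanishing of the cross terms is established, the identification of $\Sigma^2$ is a routine matrix manipulation using the definition of $U_d$.
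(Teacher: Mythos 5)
Your overall skeleton matches the paper's proof: expand $\hat{d}_n=T(f_n)$ via Proposition~\ref{Pro3}, discard the $V_n$ term because $\sqrt{n}\,V_n\to 0$, obtain the limit law of the remaining integral from Proposition~\ref{Pro4}, and identify $\Sigma^2$ through $\int_{\mathbb{R}^p} U_d(x)\left[U_d(x)\right]^t dx=\left[\int_{\mathbb{R}^p} g_d'(x)\left[g_d'(x)\right]^t dx\right]^{-1}$. The one place where you genuinely diverge, and where your argument has a gap, is the passage from $f_n^{1/2}-f_d^{1/2}$ to a quantity linear in $f_n-f_d$: you fold it into the pointwise delta-method heuristic $\mathrm{Var}\bigl(f_n^{1/2}(x)\bigr)\approx \tfrac{1}{4f_d(x)}\mathrm{Var}\bigl(f_n(x)\bigr)$, but at the $\sqrt{n}$ scale of the integrated statistic the second-order term of that expansion must be shown to be $o_P(1)$, and nothing in your proposal does this. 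The paper makes the step explicit: using the algebraic identity
\begin{equation*}
b^{1/2}-a^{1/2}=\frac{b-a}{2a^{1/2}}-\frac{(b-a)^2}{2a^{1/2}\left(b^{1/2}+a^{1/2}\right)^2},
\end{equation*}
it writes $\sqrt{n}(\hat d_n-d)=\sqrt{n}\int_{\mathbb{R}^p} U_d(x)\frac{f_n(x)-f_d(x)}{2f_d^{1/2}(x)}\,dx+A_n$ with
\begin{equation*}
|A_n|\le 2\delta^{-3/2}\int_{\mathbb{R}^p}|U_d(x)|\,\sqrt{n}\left[f_n(x)-f_d(x)\right]^2 dx,\qquad \delta=\inf_{x} f_d(x),
\end{equation*}
and then uses Conditions~\ref{c1}--\ref{c2} (which make $U_d$ bounded and continuous), Proposition~\ref{Pro1} and Vitali's theorem to conclude $A_n\to 0$ in probability. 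Without some version of this remainder control, your computation gives the variance of the linearized statistic rather than of $\sqrt{n}(\hat d_n-d)$ itself; this bound is in fact the only substantive technical step the paper carries out in its proof.

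To be fair, the final stage --- integrating the pointwise limit of Proposition~\ref{Pro4} against $U_d/(2f_d^{1/2})$, the $\sqrt{n}$ versus $\sqrt{nh_n}$ bookkeeping, and the vanishing of the off-diagonal covariance terms --- is dispatched in the paper as ``a simple calculus,'' so your acknowledged caution about interchanging limits and killing cross terms is at the same level of informality as the source; the concrete omission to repair is the treatment of the quadratic remainder $A_n$.
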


	\begin{proof}[Proof of Theorem~\ref{Teo2}]
		By proposition \ref{Pro3}, one can show that

		\bigskip
		\begin{eqnarray*}
			\sqrt{n}(\hat{d}_{n}-d) &=&\sqrt{n}\int_{%
				%TCIMACRO{\U{211d} }%
				%BeginExpansion
				\mathbb{R}
				%EndExpansion
				^{p}}U_{d}(x)\left[ f_{n}^{\frac{1}{2}}(x)-f_{d}^{\frac{1}{2}}(x)\right] dx+%
			\sqrt{n}V_{n}\int_{%
				%TCIMACRO{\U{211d} }%
				%BeginExpansion
				\mathbb{R}
				%EndExpansion
				^{p}}g_{d}^{\prime }(x)\left[ f_{n}^{\frac{1}{2}}(x)-f_{d}^{\frac{1}{2}}(x)%
			\right] dx. \\
			&=&\sqrt{n}\int_{%
				%TCIMACRO{\U{211d} }%
				%BeginExpansion
				\mathbb{R}
				%EndExpansion
				^{p}}U_{d}(x)\left[ f_{n}^{\frac{1}{2}}(x)-f_{d}^{\frac{1}{2}}(x)\right]
			dx+o_{p}(1).
		\end{eqnarray*}
		
		With $V_{n}\rightarrow 0$ in probability, $U_{d}\in L_{2}$ and $U_{d}\bot f_{d}^{\frac{1}{2}%
		}$, where $\bot $ is the orthogonality in $L_{2}.$
		
		For $b\geq 0,$ $a>0,$ the algebraic identity is given by
		
		\begin{equation*}
		b^{\frac{1}{2}}-a^{\frac{1}{2}}=\frac{b-a}{2a^{\frac{1}{2}}}-\frac{(b-a)^{2}%
		}{\left[ 2a^{\frac{1}{2}}(b^{\frac{1}{2}}+a^{\frac{1}{2}})^{2}\right] }.
		\end{equation*}
		
		According to assumption \ref{Ass3}, the condition $f_{d}^{\frac{1}{2}}(x)$$>0$ and the algebraic identity, we have
		
		\begin{equation*}
		\sqrt{n}(\hat{d}_{n}-d)=\sqrt{n}\int_{%
			%TCIMACRO{\U{211d} }%
			%BeginExpansion
			\mathbb{R}
			%EndExpansion
			^{p}}U_{d}(x)\left[ \frac{f_{n}(x)-f_{d}(x)}{2f_{d}^{\frac{1}{2}}(x)}\right]
		dx+A_{n}.
		\end{equation*}
		
		Where
		\begin{equation*}
		A_{n}=-\sqrt{n}\int_{%
			%TCIMACRO{\U{211d} }%
			%BeginExpansion
			\mathbb{R}
			%EndExpansion
			^{p}}U_{d}(x)\left[ \frac{\left[ f_{n}(x)-f_{d}(x)\right] ^{2}}{2f_{d}^{%
				\frac{1}{2}}(x)(f_{n}^{\frac{1}{2}}(x)+f_{d}^{\frac{1}{2}}(x))^{2}}\right] dx.
		\end{equation*}
		
		Hence
		
		\begin{equation*}
		|A_{n}|\leq 2\delta ^{-\frac{3}{2}}\int_{%
			%TCIMACRO{\U{211d} }%
			%BeginExpansion
			\mathbb{R}
			%EndExpansion
			^{p}}|U_{d}(x)|\sqrt{n}\left[ f_{n}(x)-f_{d}(x)\right] ^{2},
		\end{equation*}
		
		where $\delta =\underset{x\in
			%TCIMACRO{\U{211d} }%
			%BeginExpansion
			\mathbb{R}
			%EndExpansion
			^{p}}{\inf }f(x),$
		\begin{center}
			$2f_{d}^{\frac{1}{2}}(x)(f_{n}^{\frac{1}{2}}(x)+f_{d}^{%
				\frac{1}{2}}(x))^{2}>2\delta ^{\frac{3}{2}}.$
		\end{center}

		Conditions \ref{c1} and \ref{c2} imply that $U_{d}(x)$ is continuous and bounded. So, by proposition 2.1 and the Vitali's Theorem, $|A_{n}|$ tends to $0$ in probability for $n\rightarrow \infty .$
		
		So, we can rewrite $\sqrt{n}(\hat{d}_{n}-d)$ as folows
		
		\begin{equation*}
		\sqrt{n}(\hat{d}_{n}-d)=\sqrt{n}\int_{%
			%TCIMACRO{\U{211d} }%
			%BeginExpansion
			\mathbb{R}
			%EndExpansion
			^{p}}U_{d}(x)\left[ \frac{f_{n}(x)-f_{d}(x)}{2f_{d}^{\frac{1}{2}}(x)}\right]dx+o_{p}(1).
		\end{equation*}
		
		By proposition \ref{Pro4} and by a simple calculus we deduce the limiting distribution of $\sqrt{n}(\hat{d}_{n}-d)$,
		
		\begin{eqnarray*}
			&&\int_{%
				%TCIMACRO{\U{211d} }%
				%BeginExpansion
				\mathbb{R}
				%EndExpansion
				^{p}}\left[ \frac{U_{d}(x)}{2f_{d}^{\frac{1}{2}}(x)}\right] \left[ \frac{%
				U_{d}(x)}{2f_{d}^{\frac{1}{2}}(x)}\right] ^{t}\int_{%
				%TCIMACRO{\U{211d} }%
				%BeginExpansion
				\mathbb{R}
				%EndExpansion
				^{p}}K^{2}(u)duf_{d}(x)dx \\
			&=&\frac{1}{4}\int_{%
				%TCIMACRO{\U{211d} }%
				%BeginExpansion
				\mathbb{R}
				%EndExpansion
				^{p}}U_{d}(x)(U_{d}(x))^{t}dx\int_{%
				%TCIMACRO{\U{211d} }%
				%BeginExpansion
				\mathbb{R}
				%EndExpansion
				^{p}}K^{2}(u)du.
		\end{eqnarray*}
		
		Which proves Theorem \ref{Teo2}.
	\end{proof}
	\section{Simulation}
	\label{sec3}   
	In this section, we present a simulation experiment, in order to illustrate, the most significative results for the method proposed in this article, we apply numerically the Minimum Hellinger distance method. We consider two cases:\\ First case, we assume that the white noise $\epsilon _{m}$, $\epsilon _{m}=(\varepsilon_{1+2m},\varepsilon_{2+2m})^{^{\prime }}$  is the
	density function of the standard normal distribution and in this case the kernel density $K$ is also the density of 
	the standard normal distribution. \\Second case, we consider $\epsilon _{m}$ and K
	as the density function of the Cauchy distribution (see \cite{LPK2014}).\\ We generate a
	PtvARFIMA$_{2}(0,d,0),$ $X_{m}=(X_{1+2m},X_{2+2m})^{^{\prime }}$, $%
	m=0,1,..,n\prime -1,$ we use three simple size $n=10;\ 50;\ 100$, with two
	different values of $d,$ such that $d=(0.2,0.15)$ and $d=(0.49,0.4).$ We have

	\begin{equation*}
	MSE=\frac{1}{n_{r}}\underset{j=1}{\overset{n_{r}}{\sum }}\{(\hat{d}%
	_{n,1}^{j}-d_{1})^{2}+(\hat{d}_{n,2}^{j}-d_{2})^{2}\},
	\end{equation*}
	
	where $n_{r}$ $=100$ is the number of replications and $(\hat{d}_{n,1}^{j},%
	\hat{d}_{n,2}^{j})$ denote the estimate of $d$ for the $j$th replication.
	The results are displayed in the following tables:\\
	
	For the first case, 
	\begin{table}[h!]
		\centering
		
		\begin{tabular}{|l|l|l|l|}
			\hline
			$n$ & $10$ & $50$ & $100$ \\ \hline
			$MHDE$ & $(0.2006,0.1494)$ & $(0.1987,0.1512)$ & $(0.2077,0.1423)$ \\ \hline
			
			$MSE$ & $0.0075$ & $0.0071$ & $0.0054$ \\ \hline
		\end{tabular}\\
		\caption{MHDE for $d=(0.2,0.15)$. }
		\label{Tab1}
	\end{table}

	\begin{table}[h!]
		\centering
		
		\begin{tabular}{|l|l|l|l|}
			\hline
			$n$ & $10$ & $50$ & $100$ \\ \hline
			$MHDE$ & $(0.4944,0.0.3956)$ & $(0.4939,0.4011)$ & $(0.4938,0.401)$ \\ \hline
			
			$MSE$ & $0.000895$ & $0.000738$ & $0.00064$ \\ \hline
		\end{tabular}\\
		
		\caption{MHDE for $d=(0.49,0.4)$. }
		\label{Tab2}
		
	\end{table}

	Second case
	
	\begin{table}[h!]
		\centering
		
		\begin{tabular}{|l|l|l|l|}
			\hline
			$n$ & $10$ & $50$ & $100$ \\ \hline
			$MHDE$ & $(0.2084,0.1416)$ & $(0.1992,0.1507)$ & $(0.1988,0.1511)$ \\ \hline
			
			$MSE$ & $0.0084$ & $0.0068$ & $0.0063$ \\ \hline
		\end{tabular}\\
		\caption{MHDE for $d=(0.2,0.15)$. }
		\label{Tab3}
		
	\end{table}
	
	\begin{table}[h!]
		\centering
		\begin{tabular}{|l|l|l|l|}
			\hline
			$n$ & $10$ & $50$ & $100$ \\ \hline
			$MHDE$ & $(0.4944,0.3952)$ & $(0.494,0.3992)$ & $(0.4943,0.3973)$ \\ \hline
			$MSE$ & $0.000859$ & $0.000796$ & $0.000758$ \\ \hline
		\end{tabular}\\
		\caption{MHDE for $d=(0.49,0.4)$. }
		\label{Tab4}
	\end{table}
	
	From tables (Table \ref{Tab1}, \ref{Tab2}, \ref{Tab3} and \ref{Tab4}) we can deduce that the mean parameter estimates are very close in value to the true value of the parameters, and the normal law provides a high accuracy. However, the MSE is decreasing with the increase in sample size. This happens because the impact of the decrease in variance with increasing sample size.  
	
	\section{conclusion}
	In this paper, we have addressed the problem of the Minimum Hellinger distance (MHD) in the \cite{B1977}'s style for a periodically time-varying long-memory parameter. We have constructed an estimate for the vector parameters $d$, using the MHD method and studied its asymptotic properties by considering the related multivariate stationary model. We have also presented some numerical simulations illustrating our theoretical results.

\end{document}